\documentclass[12pt]{article}

\usepackage[margin=1in]{geometry}
\usepackage{amsthm}
\usepackage{amssymb}
\usepackage{amsmath}
\usepackage{graphicx}
\usepackage{mathdots}
\usepackage{mathtools}
\usepackage{url}
\usepackage{color}
\usepackage{framed}

\newtheorem{theorem}{Theorem}

\newtheorem{proposition}[theorem]{Proposition}

\theoremstyle{definition}

\newtheorem{example}[theorem]{Example}
\newtheorem{definition}[theorem]{Definition}

\title{Equi-isoclinic subspaces, covers of the complete graph, and complex conference matrices}

\author{Matthew~Fickus\footnote{Department of Mathematics \& Statistics, Air Force Institute of Technology, Wright-Patterson AFB, OH} \quad Joseph~W.~Iverson\footnote{Department of Mathematics, Iowa State University, Ames, IA} \quad John~Jasper\footnotemark[1] \quad Dustin~G.~Mixon\footnote{Department of Mathematics, The Ohio State University, Columbus, OH} \footnote{Translational Data Analytics Institute, The Ohio State University, Columbus, OH}}
\date{}

\begin{document}
\maketitle

\begin{abstract}
In 1992, Godsil and Hensel published a ground-breaking study of distance-regular antipodal covers of the complete graph that, among other things, introduced an important connection with equi-isoclinic subspaces.
This connection seems to have been overlooked, as many of its immediate consequences have never been detailed in the literature.
To correct this situation, we first describe how Godsil and Hensel's machine uses representation theory to construct equi-isoclinic tight fusion frames.
Applying this machine to Mathon's construction produces $q+1$ planes in $\mathbb{R}^{q+1}$ for any even prime power $q>2$. 
Despite being an application of the $30$-year-old Godsil--Hensel result, infinitely many of these parameters have never been enunciated in the literature.
Following ideas from Et-Taoui, we then investigate a fruitful interplay with complex symmetric conference matrices.
\end{abstract}

Consider the problem of arranging a fixed number of points in the Grassmannian so that the minimum pairwise distance is maximized.
There are a few choices of ``distance'' that one could use here, but for several choices, a sufficient optimality condition is that the subspaces form what is known as an \textit{equi-isoclinic tight fusion frame} (or \textsc{eitff}); see~\cite{DhillonHST:08,FickusJMW:17,FickusIJM:22}.
Given matrices $M_1,\ldots,M_n\in\mathbb{C}^{d\times r}$, each with orthonormal columns, we say the subspaces $\{\operatorname{im}(M_i)\}_{i=1}^n$ form an \textsc{eitff} if there exist $\alpha,\beta>0$ such that
\[
\alpha M_i^*M_j\in\operatorname{U}(r) \text{ whenever } i\neq j
\qquad
\text{ and }
\qquad
\sum_{i=1}^n M_iM_i^*=\beta I_d,
\]
where $\operatorname{U}(r)$ denotes the group of $r\times r$ unitary matrices and $I_d$ denotes the $d\times d$ identity matrix.
If $M_1,\ldots,M_n\in\mathbb{R}^{d\times r}\subseteq\mathbb{C}^{d\times r}$, then their images in $\mathbb{R}^d$ create a \textit{real} \textsc{eitff}.
The special case where $r=1$ reduces to a well-studied object known as an \textit{equiangular tight frame}~\cite{StrohmerH:03,FickusM:15}.
(While mutually orthogonal subspaces with direct sum $\mathbb{C}^d$ are typically considered to be examples of \textsc{eitff}s, our formulation does not include them for convenience.)

In this paper, we interact with \textsc{eitff}s using a different type of matrix representation.
Denoting $M:=[M_1\cdots M_n]\in(\mathbb{C}^{d\times r})^{1\times n}$, we refer to $M^*M\in(\mathbb{C}^{r\times r})^{n\times n}$ as the corresponding \textit{fusion Gram matrix}.
Observe that a matrix $G\in(\mathbb{C}^{r\times r})^{n\times n}$ is an \textsc{eitff} fusion Gram matrix if it satisfies the following properties for some choice of scalars $\alpha,\beta>0$:
\begin{itemize}
\item
$G_{ii}=I_r$ for every $i\in[n]$,
\item
$\alpha G_{ij}\in\operatorname{U}(r)$ for every $i,j\in[n]$ with $i\neq j$, and
\item
$G$ is $\beta$ times an orthogonal projection matrix.
\end{itemize}
Furthermore, an \textsc{eitff} fusion Gram matrix determines the $\operatorname{U}(d)$ orbit of its underlying \textsc{eitff}.
(To identify a member of this orbit, one may simply factor $G=M^*M$.)
Subtracting the identity from an \textsc{eitff} fusion Gram matrix and then dividing by $\alpha$ results in the primary object of interest in this paper:

\begin{definition}
We say $S\in(\mathbb{C}^{r\times r})^{n\times n}$ is an \textsc{eitff} \textbf{signature matrix} if
\begin{itemize}
\item[(S1)]
$S_{ii}=0$ for every $i\in[n]$,
\item[(S2)]
$S_{ij}\in\operatorname{U}(r)$ for every $i,j\in[n]$ with $i\neq j$,
\item[(S3)]
$(S_{ij})^*=S_{ji}$ for every $i,j\in[n]$ with $i\neq j$, and
\item[(S4)]
the minimal polynomial of $S$ has degree $2$.
\end{itemize}
\end{definition}

Observe that (S1) implies $\operatorname{tr}(S)=0$, (S2) implies $S$ is nonzero, (S3) implies the eigenvalues of $S$ are all real, and (S4) implies $S$ has at most two eigenvalues.
It follows that $S$ has exactly one positive eigenvalue and exactly one negative eigenvalue.
We call $S$ a $(d,n,r)$-\textsc{eitff} signature matrix if its positive eigenvalue has multiplicity $d$.

Given an \textsc{eitff} signature matrix $S$, the corresponding \textsc{eitff} fusion Gram matrix (and hence the \textsc{eitff} itself) can be recovered as $I_{rn}-\lambda_{\min}(S)^{-1}\cdot S$.
In this paper, we discuss how to construct \textsc{eitff} signature matrices using different combinatorial objects, namely, covers of the complete graph and complex conference matrices.

Given integers $n,m,c>0$, an $(n,m,c)$-\textsc{drackn} (short for \textit{distance-regular antipodal cover of the complete graph}) is a simple graph on a set $V$ of $mn$ vertices, together with a partition $\{V_i\}_{i=1}^n$ of $V$ into size-$m$ subsets (called \textit{fibers}) such that
\begin{itemize}
\item
vertices in a common fiber are nonadjacent,
\item
each pair of distinct fibers induces a perfect matching between those fibers, and
\item
nonadjacent vertices from distinct fibers have exactly $c$ common neighbors.
\end{itemize}
(While the \textsc{drackn} literature typically uses $r$ in place of our $m$, we use $m$ to disambiguate from the \textsc{eitff} parameter.)
We note that the partition into fibers can be recovered from the graph data as equivalence classes of \textit{antipodal} vertices (in this case, vertices of pairwise distance $3$).
The theory of \textsc{drackn}s was explored in~\cite{GodsilH:92}, and has since been used to construct \textsc{eitff}s with $r=1$~\cite{CoutinhoCSZ:16}.
We are interested in adjacency matrices of such graphs, and it is convenient to use the fibers to define a block structure for such a matrix:

\begin{definition}
We say $A\in(\mathbb{C}^{m\times m})^{n\times n}$ is an $(n,m,c)$-\textsc{drackn} \textbf{adjacency matrix} if
\begin{itemize}
\item[(D1)]
$A_{ii}=0$ for every $i\in[n]$,
\item[(D2)]
$A_{ij}$ is a permutation matrix for every $i,j\in[n]$ with $i\neq j$,
\item[(D3)]
$(A_{ij})^\top=A_{ji}$ for every $i,j\in[n]$ with $i\neq j$, and
\item[(D4)]
$A^2=(n-mc-2)A+(n-1)I_{mn}+c(J_n-I_n)\otimes J_m$,
\end{itemize}
where $J_n$ denotes the $n\times n$ all-ones matrix.
\end{definition}

Indeed, (D1) captures the nonadjacency within fibers, (D2) captures the induced perfect matchings between fibers, (D3) says that the graph is not directed, and (D4) expresses the definition of $c$ in terms of the adjacency matrix.
An $(n,m,c)$-\textsc{drackn} adjacency matrix has eigenvalues $n-1$ and $-1$ with respective multiplicities $1$ and $n-1$.
There are two other eigenvalues denoted by $\theta$ and $\tau$, which we express in terms of $\delta:=n-mc-2$:
\[
\theta:=\frac{\delta+\sqrt{\delta^2+4(n-1)}}{2},
\qquad
\tau:=\frac{\delta-\sqrt{\delta^2+4(n-1)}}{2}.
\]
The off-diagonal blocks generate a group of permutation matrices that we denote by $\Gamma$.
For any representation $\rho\colon\Gamma\to\operatorname{U}(d_\rho)$, we take $\hat\rho(A)\in(\mathbb{C}^{d_\rho\times d_\rho})^{n\times n}$ to be the matrix with zero blocks on the diagonal and off-diagonal blocks $(\hat\rho(A))_{ij}=\rho(A_{ij})$.

\begin{theorem}[cf.\ \cite{GodsilH:92}]
\label{thm.main result}
Given an $(n,m,c)$-\textsc{drackn} adjacency matrix $A$ with eigenvalues $n-1$, $\theta$, $-1$, and $\tau$, let $\Gamma$ denote the group of permutation matrices generated by the off-diagonal blocks of $A$.
Select any degree-$r$ constituent representation $\pi$ of the inclusion map $\Gamma\hookrightarrow\operatorname{U}(m)$ such that $\pi$ does not contain the trivial representation as a constituent.
Then $\hat\pi(A)$ is a signature matrix of a $(d,n,r)$-\textsc{eitff} with $d=\frac{rn|\tau|}{\theta-\tau}$.
Furthermore, this \textsc{eitff} is real if $\pi$ is real.
\end{theorem}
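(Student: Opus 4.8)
The plan is to verify that $\hat\pi(A)$ satisfies the four defining properties (S1)--(S4) of an \textsc{eitff} signature matrix and then to read off $d$ from the trace. Properties (S1)--(S3) are immediate from the block structure: (S1) holds since $\hat\pi(A)$ has zero diagonal blocks by definition; (S2) holds since each $A_{ij}\in\Gamma$ for $i\neq j$ and $\pi$ maps $\Gamma$ into $\operatorname{U}(r)$; and (S3) holds because $A_{ji}=A_{ij}^\top=A_{ij}^{-1}$ by (D3) together with the fact that $A_{ij}$ is a permutation matrix, whence $\pi(A_{ji})=\pi(A_{ij})^{-1}=\pi(A_{ij})^*$ by unitarity of $\pi$. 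The real content is (S4), and the key identity to establish is the quadratic relation
\[
\hat\pi(A)^2 = \delta\,\hat\pi(A) + (n-1)\,I_{rn},
\qquad \delta = n-mc-2,
\]
which exhibits $\theta$ and $\tau$ as the only possible eigenvalues.

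To prove this relation I would realize $\pi$ as a compression. Fix an isometry $V\in\mathbb{C}^{m\times r}$ onto a $\Gamma$-invariant subspace $W=\operatorname{im}(V)$ carrying $\pi$, so that $gV=V\pi(g)$ and hence $\pi(g)=V^*gV$ for every $g\in\Gamma$; this identifies $\hat\pi(A)$ with the compression $(I_n\otimes V^*)\,A\,(I_n\otimes V)$. The map $X\mapsto V^*XV$ is linear on $\mathbb{C}^{m\times m}$ and restricts to $\pi$ on $\Gamma$, so it may be applied term by term to the block identity coming from (D4). Concretely, for $i\neq k$ the product rule for $\pi$ and the invariance of $W$ give
\[
\big(\hat\pi(A)^2\big)_{ik}
= \sum_{j\neq i,k} \pi(A_{ij})\pi(A_{jk})
= \sum_{j\neq i,k} V^*A_{ij}A_{jk}V
= V^*\Big(\textstyle\sum_{j\neq i,k} A_{ij}A_{jk}\Big)V,
\]
and the off-diagonal part of (D4) evaluates the inner sum to $\delta A_{ik}+cJ_m$. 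Thus $\big(\hat\pi(A)^2\big)_{ik}=\delta\,\pi(A_{ik})+c\,V^*J_mV$; for $i=k$ each summand is $\pi(A_{ij})\pi(A_{ji})=\pi(A_{ij}A_{ij}^{-1})=I_r$, giving $(n-1)I_r$ on the diagonal.

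The crux, and the step I expect to be the main obstacle, is showing that the stray term $c\,V^*J_mV$ vanishes --- this is precisely where the hypothesis that $\pi$ omits the trivial representation is used. Since every permutation matrix fixes the all-ones vector $\mathbf{1}$, the line $\operatorname{span}(\mathbf{1})$ is a trivial subrepresentation and hence lies in the trivial isotypic component of the inclusion $\Gamma\hookrightarrow\operatorname{U}(m)$; because $\pi$ contains no trivial constituent, $W$ is orthogonal to this component, so $V^*\mathbf{1}=0$ and $V^*J_mV=(V^*\mathbf{1})(V^*\mathbf{1})^*=0$. This establishes the quadratic relation, so the minimal polynomial of $\hat\pi(A)$ divides $(x-\theta)(x-\tau)$.

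It remains to rule out degeneracy and to compute $d$. Since $\theta>0>\tau$, while $\hat\pi(A)$ has zero diagonal blocks and therefore trace zero, $\hat\pi(A)$ cannot be a scalar multiple of $I_{rn}$; hence both eigenvalues occur and the minimal polynomial has degree exactly $2$, giving (S4). Writing $d$ and $rn-d$ for the multiplicities of $\theta$ and $\tau$, the vanishing trace gives $d\theta+(rn-d)\tau=0$, that is, $d=\frac{rn|\tau|}{\theta-\tau}$. Finally, when $\pi$ is real every block $\pi(A_{ij})$ is real, so $\hat\pi(A)$ is real and the recovered fusion Gram matrix $I_{rn}-\tau^{-1}\hat\pi(A)$ is a real positive-semidefinite matrix, which admits a real factorization $M^\top M$ and hence yields a real \textsc{eitff}.
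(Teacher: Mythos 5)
Your proof is correct, but it takes a genuinely different route to (S4) than the paper does. The paper decomposes the inclusion $\iota\colon\Gamma\hookrightarrow\operatorname{U}(m)$ into irreducibles, uses the similarity $\hat\iota(A)\cong\bigoplus_j\hat\pi_j(A)$ to factor characteristic polynomials, and then peels the trivial constituent's contribution $(x-n+1)(x+1)^{n-1}$ off of the \emph{known} characteristic polynomial $\chi_A(x)=(x-n+1)(x-\theta)^{m_\theta}(x+1)^{n-1}(x-\tau)^{m_\tau}$ to conclude that $\chi_{\hat\pi(A)}$ has only $\theta$ and $\tau$ as roots. You instead verify the quadratic relation $\hat\pi(A)^2=\delta\,\hat\pi(A)+(n-1)I_{rn}$ blockwise, realizing $\pi$ as a compression $\pi(g)=V^*gV$ onto an invariant subspace and using invariance to pull $V^*(\cdot)V$ through the products; the no-trivial-constituent hypothesis enters exactly once, to force $V^*\mathbf{1}=0$ and hence kill the stray $c\,V^*J_mV$ term coming from (D4). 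Your version is more self-contained --- it derives the spectral constraint directly from axiom (D4) rather than quoting the spectrum of a \textsc{drackn} adjacency matrix as the paper's equation for $\chi_A$ does, and it isolates precisely where the hypothesis on $\pi$ is used --- at the cost of a small amount of bookkeeping (one should note, as you implicitly do, that $VV^*A_{jk}V=A_{jk}V$ by invariance of $\operatorname{im}(V)$, and that orthogonality of $\operatorname{im}(V)$ to the trivial isotypic component follows from averaging over the finite group $\Gamma$). The paper's version handles all constituents simultaneously and makes transparent how the multiplicities $m_\theta$, $m_\tau$ distribute among them. The endgame --- ruling out a degree-one minimal polynomial via $\operatorname{tr}\hat\pi(A)=0$ and solving $d\theta+(rn-d)\tau=0$ for $d$ --- is the same in both arguments, as is the treatment of realness.
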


We emphasize that $\pi$ may be a reducible representation; see Example~\ref{ex.godsil hensel}, for instance.
Furthermore, the requirement that $\pi$ does not contain the trivial representation requires the \textsc{drackn} to have fibers of size $m>1$, i.e., it is not the complete graph of order $n$.
Interestingly, the redundancy $\frac{rn}{d}=\frac{\theta-\tau}{|\tau|}$ of the \textsc{eitff} from Theorem~\ref{thm.main result} is determined by the \textsc{drackn} parameters $(n,m,c)$, i.e., it does not depend on the choice of $\pi$.
In the case where $\pi$ is irreducible, Theorem~\ref{thm.main result} is described in the final paragraph of Section~13 in~\cite{GodsilH:92}.
We will use this result to produce never-before-seen parameters of equi-isoclinic tight fusion frames.
For completeness, we flesh out the proof of Theorem~\ref{thm.main result} below.

\begin{proof}[Proof of Theorem~\ref{thm.main result}]
First, (S1)--(S3) follow immediately from (D1)--(D3) and the definition of $\hat\pi$.
The characteristic polynomial of $A$ takes the form
\begin{equation}
\label{eq.char poly 1}
\chi_A(x)
=(x-n+1)(x-\theta)^{m_\theta}(x+1)^{n-1}(x-\tau)^{m_\tau}
\end{equation}
for some $m_\tau,m_\theta\in\mathbb{N}$.
In what follows, we use basic ideas from representation theory to show that the characteristic polynomial of $\hat\pi(A)$ divides $(x-\theta)^{m_\theta}(x-\tau)^{m_\tau}$.
We decompose the inclusion map $\iota\colon\Gamma\hookrightarrow\operatorname{U}(m)$ and the constituent $\pi\colon\Gamma\to\operatorname{U}(r)$ as direct sums of irreducibles:
\[
\iota\cong\bigoplus_{j\in[k]}\pi_j,
\qquad
\pi\cong\bigoplus_{j\in S}\pi_j,
\]
where $S\subseteq[k]$.
By the matrix similarities
\[
\hat\iota(A)\cong\bigoplus_{j\in[k]}\hat\pi_j(A),
\qquad
\hat\pi(A)\cong\bigoplus_{j\in S}\hat\pi_j(A),
\]
we may factor the characteristic polynomials:
\[
\chi_{\hat\iota(A)}(x)=\prod_{j\in[k]}\chi_{\hat\pi_j(A)}(x),
\qquad
\chi_{\hat\pi(A)}(x)=\prod_{j\in S}\chi_{\hat\pi_j(A)}(x).
\]
Since $\iota$ is a permutation representation, the trivial representation is a constituent, which we take to be $\pi_1$ without loss of generality.
Then $\hat\pi_1(A)=J_n-I_n$, and so
\[
\chi_{\hat\pi_1(A)}(x)=(x-n+1)(x+1)^{n-1}.
\]
By our assumption on $\pi$, we have $1\not\in S$, which in turn gives
\begin{align}
\label{eq.char poly 2}
\chi_A(x)
&=\chi_{\hat\iota(A)}(x)
=\prod_{j=1}^k\chi_{\hat\pi_j(A)}(x)
\nonumber
=\prod_{j\in S}\chi_{\hat\pi_j(A)}(x)\cdot\prod_{j\not\in S}\chi_{\hat\pi_j(A)}(x)\\
&=\chi_{\hat\pi(A)}(x)\cdot(x-n+1)(x+1)^{n-1}\cdot\prod_{j\not\in S\cup\{1\}}\chi_{\hat\pi_j(A)}(x).
\end{align}
Thus, combining \eqref{eq.char poly 1} and \eqref{eq.char poly 2} gives
\[
\chi_{\hat\pi(A)}(x)
=(x-\theta)^{a}(x-\tau)^{b}
\]
for some $a,b\in\mathbb{N}\cup\{0\}$.
Furthermore, $\operatorname{tr}\hat\pi(A)=0$ implies $a,b>0$.
This gives (S4).
To determine $d$, we observe that $a$ and $b$ satisfy $a+b=rn$ and $a\theta+b\tau=0$, and so
\[
a=-\frac{rn\tau}{\theta-\tau},
\qquad
b=\frac{rn\theta}{\theta-\tau}.
\]
Since $\theta>0$ is the positive eigenvalue of $\hat\pi(A)$, it follows that $d=a$.
Finally, when $\pi$ is real, the resulting fusion Gram matrix is real, too.
\end{proof}

In what follows, we highlight a few applications of Theorem~\ref{thm.main result}.

\begin{example}[Abelian \textsc{drackn}s and equiangular lines]
\label{ex.abelian drackns}
A \textsc{drackn} is said to be \textit{abelian} if the group $\Gamma\leq S_m$ generated by the off-diagonal blocks of the \textsc{drackn} adjacency matrix is abelian and acts regularly on $[m]$.
For this class of \textsc{drackn}s (and restricting to irreducible representations), Theorem~\ref{thm.main result} amounts to Theorem~4.1 in~\cite{CoutinhoCSZ:16}, and most of the immediate consequences of Theorem~\ref{thm.main result} are summarized in \cite{CoutinhoCSZ:16}.
Here, the irreducible constituent representations all have degree~$1$, and so the resulting \textsc{eitff}s consist of equiangular lines.
\end{example}

\begin{example}[Section~11 in~\cite{GodsilH:92}]
\label{ex.godsil hensel}
Given an $(n,m,c)$-\textsc{drackn} adjacency matrix $A$, Godsil and Hensel~\cite{GodsilH:92} consider a matrix $M\in(\mathbb{R}^{m_\theta\times m})^{1\times n}$ whose rows form an orthonormal basis for the $\theta$-eigenspace of $A$.
The images of the $m_\theta\times m$ blocks of $M$ form a real ${(m_\theta,n,m-1)}$-\textsc{eitff}.
An equivalent construction arises from Theorem~\ref{thm.main result} by taking $\pi$ to be the degree-$(m-1)$ representation given by the action of $\Gamma$ on the orthogonal complement of the all-ones vector in $\mathbb{R}^m$.
For example, a construction due to Gardiner~\cite{Gardiner:74} gives a $(7,6,1)$-\textsc{drackn}, which produces a real $(21,7,5)$-\textsc{eitff}.
\end{example}

\begin{example}[Mathon~\textsc{drackn}s]
\label{ex.mathon}
Next, we describe \textsc{drackn}s (due to Mathon~\cite{Mathon:75,BrouwerCN:87,GodsilH:92}) for which Theorem~\ref{thm.main result} produces \textsc{eitff}s beyond those covered by Examples~\ref{ex.abelian drackns} and~\ref{ex.godsil hensel}.
Fix an even prime power $q$.
Take $V:=\mathbb{F}_q^2$ with a symplectic form $B$.
Define $G$ to be the graph with vertex set $V\setminus\{0\}$ with $u$ adjacent to $v$ precisely when $B(u,v)=1$.
Then $G$ is a $(q+1,q-1,1)$-\textsc{drackn}.
As we discuss in the proof of Theorem~\ref{thm.mathon eitffs}, the resulting permutation group is isomorphic to the dihedral group of order $2(q-1)$, and when $q>2$, Theorem~\ref{thm.main result} produces an \textsc{eitff} consisting of $q+1$ planes in $\mathbb{R}^{q+1}$.
\end{example}

The \textsc{eitff} parameters enunciated in Example~\ref{ex.mathon} are new to the literature for infinitely many $q$, but they could have been obtained $30$ years ago by combining ideas from Godsil and Hensel~\cite{GodsilH:92} with Mathon's \textsc{drackn}.
Interestingly, there is now a \textit{second} combination of results from the more recent literature that produces these same \textsc{eitff} parameters.
In particular, one may combine the \textit{complex symmetric conference matrices} given in Theorem~5.2 of~\cite{FickusIJM:21} with a key result due to Et-Taoui~\cite{EtTaoui:18}, namely, Proposition~\ref{prop.et-taoui} below.
This reveals a fruitful interplay between \textsc{drackn}s, \textsc{eitff}s, and conference matrices that we investigate for the remainder of this paper.

\begin{definition}
We say $C\in\mathbb{C}^{n\times n}$ is a \textbf{symmetric conference matrix} if
\begin{itemize}
\item[(C1)]
$C_{ii}=0$ for every $i\in[n]$,
\item[(C2)]
$|C_{ij}|=1$ for every $i,j\in[n]$ with $i\neq j$,
\item[(C3)]
$C^\top=C$, and 
\item[(C4)]
$CC^*=(n-1)I_n$.
\end{itemize}
\end{definition}

\begin{proposition}[Et-Taoui~\cite{EtTaoui:18,BlokhuisBE:18}]
\label{prop.et-taoui}
Given $C\in\mathbb{C}^{n\times n}$, define $S\in(\mathbb{R}^{2\times 2})^{n\times n}$ by
\[
S_{ij}:=\left[\begin{array}{rr}
\operatorname{Re}(C_{ij})&\operatorname{Im}(C_{ij})\\
\operatorname{Im}(C_{ij})&-\operatorname{Re}(C_{ij})
\end{array}\right].
\]
Then $S$ is a $(d,n,r)$-\textsc{eitff} signature matrix with $d=n$ and $r=2$ if and only if $C$ is a symmetric conference matrix.
\end{proposition}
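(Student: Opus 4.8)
The plan is to regard the block assignment $C_{ij}\mapsto S_{ij}$ as the entrywise application of a single real-linear map and to read off all four signature conditions from its algebra. Define $\phi\colon\mathbb{C}\to\mathbb{R}^{2\times2}$ by $\phi(a+bi):=\left[\begin{smallmatrix}a&b\\b&-a\end{smallmatrix}\right]$, so that $S_{ij}=\phi(C_{ij})$, and let $\rho\colon\mathbb{C}\to\mathbb{R}^{2\times2}$ be the standard field embedding $\rho(a+bi):=\left[\begin{smallmatrix}a&-b\\b&a\end{smallmatrix}\right]$. First I would record the elementary facts I need: both $\phi$ and $\rho$ are real-linear and injective; $\phi(z)$ is symmetric with $\phi(z)^2=|z|^2 I_2$; $\rho(t)=t I_2$ for real $t$; and, with $K:=\operatorname{diag}(1,-1)$, one has $\phi(z)=\rho(z)K$ together with $K\rho(w)K=\rho(\overline w)$, which combine to give the multiplicative identity $\phi(z)\phi(w)=\rho(z\overline w)$. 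Getting this last identity right -- in particular the placement of the conjugate -- is the one spot that demands care; everything afterward is bookkeeping built on it.

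With these tools, I would match (S1)--(S3) to (C1)--(C3) by injectivity of $\phi$: since $\phi$ is injective, $S_{ii}=0$ is equivalent to $C_{ii}=0$, giving (S1)$\Leftrightarrow$(C1); since $\phi(z)$ is real and symmetric, $S_{ij}\in\operatorname{U}(2)$ holds exactly when $\phi(C_{ij})^2=|C_{ij}|^2 I_2=I_2$, i.e.\ $|C_{ij}|=1$, giving (S2)$\Leftrightarrow$(C2); and, again using that $\phi(z)$ is symmetric and $\phi$ is injective, $(S_{ij})^*=S_{ji}$ amounts to $\phi(C_{ij})=\phi(C_{ji})$, i.e.\ $C_{ij}=C_{ji}$, giving (S3)$\Leftrightarrow$(C3). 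Note that either chain forces $C$ to be symmetric, so that $S$ is a real symmetric matrix; I would use this symmetry throughout the remaining step.

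It remains to relate (S4) together with $d=n$ to (C4). The heart of the argument is the block computation
\[
(S^2)_{ik}=\sum_{j}\phi(C_{ij})\phi(C_{jk})=\rho\Big(\sum_{j}C_{ij}\overline{C_{jk}}\Big)=\rho\big((CC^*)_{ik}\big),
\]
where the last equality uses the symmetry of $C$ to rewrite $\overline{C_{jk}}=(C^*)_{jk}$. If $C$ is a symmetric conference matrix, then (C4) gives $CC^*=(n-1)I_n$, hence $S^2=(n-1)I_{2n}$; since $S\neq0$, its minimal polynomial is $x^2-(n-1)$, which is (S4), and the eigenvalues $\pm\sqrt{n-1}$ together with $\operatorname{tr}(S)=0$ force the positive eigenvalue to have multiplicity $n$, so $d=n$ (while $r=2$ is automatic from the block size). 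Conversely, if $S$ is an $(n,n,2)$-\textsc{eitff} signature matrix, then by the discussion following the signature definition it has a single positive and a single negative eigenvalue, each of multiplicity $n$; trace zero makes them $\pm\lambda$, whence $S$ is diagonalizable with $S^2=\lambda^2 I_{2n}$, and matching this against the displayed identity through the injectivity of $\rho$ yields $CC^*=\lambda^2 I_n$. Finally, comparing traces via $\operatorname{tr}(CC^*)=\sum_{i\neq j}|C_{ij}|^2=n(n-1)$ pins down $\lambda^2=n-1$, which is exactly (C4). The only genuine obstacle is the multiplicative identity of the first paragraph; once it is in hand, the equivalence of each condition is immediate.
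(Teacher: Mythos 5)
Your proposal is correct and follows essentially the same route as the paper: your identity $\phi(z)\phi(w)=\rho(z\overline{w})$ is exactly the blockwise form of the paper's factorization $S=\operatorname{ctr}(C)(I_n\otimes[\begin{smallmatrix}1&\phantom{-}0\\0&-1\end{smallmatrix}])$ and the computation $SS^\top=\operatorname{ctr}(CC^\ast)$, and both directions (eigenvalue multiplicities from $d=n$ and trace zero, then a Frobenius-norm/trace count to pin down $\lambda^2=n-1$) match the paper's argument. One negligible imprecision: to conclude the minimal polynomial of $S$ has degree $2$ from $S^2=(n-1)I_{2n}$ you should cite $\operatorname{tr}(S)=0$ (so $S$ is not a scalar matrix) rather than merely $S\neq 0$.
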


Recall the usual homomorphism of real $*$-algebras $\operatorname{ctr}\colon\mathbb{C}^{n\times n}\to(\mathbb{R}^{2\times 2})^{n\times n}$ that replaces each entry $a+bi$ with the corresponding block $[\begin{smallmatrix}a&-b\\b&\phantom{-}a\end{smallmatrix}]$.
This is related to (but different from) the replacement $a+bi\mapsto[\begin{smallmatrix}a&\phantom{-}b\\b&-a\end{smallmatrix}]$ used in Proposition~\ref{prop.et-taoui}.

\begin{proof}[Proof of Proposition~\ref{prop.et-taoui}]
It is clear that (S1)$\Leftrightarrow$(C1), (S2)$\Leftrightarrow$(C2), and (S3)$\Leftrightarrow$(C3).
Next, we verify that (S4)$\Leftrightarrow$(C4).
For ($\Leftarrow$), we have $S=\operatorname{ctr}(C)(I_n\otimes[\begin{smallmatrix}1&\phantom{-}0\\0&-1\end{smallmatrix}])$, and so
\begin{align}
S^2
\nonumber
&=SS^\top
=\operatorname{ctr}(C)(I_n\otimes[\begin{smallmatrix}1&\phantom{-}0\\0&-1\end{smallmatrix}])(I_n\otimes[\begin{smallmatrix}1&\phantom{-}0\\0&-1\end{smallmatrix}])^\top\operatorname{ctr}(C)^\top\\
\label{eq.et-taoui}
&=\operatorname{ctr}(C)\operatorname{ctr}(C)^\top
=\operatorname{ctr}(C)\operatorname{ctr}(C^*)
=\operatorname{ctr}(CC^*)
=\operatorname{ctr}((n-1)I_n)
=(n-1)I_{2n}.
\end{align}
Thus, $S$ is an \textsc{eitff} signature matrix.
Furthermore, $d=n$ follows from the fact that the eigenvalues of $S$ are $\pm\sqrt{n-1}$, each with multiplicity $n$, while $r=2$ follows from the fact that $S$ has $2\times 2$ blocks.

For ($\Rightarrow$), the positive eigenvalue of $S$ has multiplicity $d=n$ by assumption.
It follows that the negative eigenvalue of $S$ has multiplicity $2n-d=n$.
Since $\operatorname{tr}(S)=0$, the eigenvalues of $S$ are $\pm\lambda$ for some $\lambda>0$.
The squared Frobenius norm of $S$ is $2n(n-1)$, which necessarily equals $2n\lambda^2$, and so the minimal polynomial of $S$ is $x^2-(n-1)$.
That is, $S^2=(n-1)I_{2n}$.
The result then follows by an argument similar to \eqref{eq.et-taoui}.
\end{proof}

The proof of the following result reveals the relationship between Theorem~\ref{thm.main result} and Proposition~\ref{prop.et-taoui} in the case of Example~\ref{ex.mathon}.

\begin{theorem}
\label{thm.mathon eitffs}
For each $k>1$, there exists a symmetric conference matrix in $\mathbb{C}^{(2^k+1)\times(2^k+1)}$.
\end{theorem}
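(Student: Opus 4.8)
The plan is to specialize the Mathon DRACKN of Example~\ref{ex.mathon} to the even prime power $q=2^k$ and then recognize the resulting signature matrix as an instance of the Et-Taoui form in Proposition~\ref{prop.et-taoui}. Since $k>1$, the number $q=2^k$ is an even prime power exceeding $2$, so Example~\ref{ex.mathon} supplies a $(q+1,q-1,1)$-\textsc{drackn} adjacency matrix $A$. Here $\delta=n-mc-2=(q+1)-(q-1)-2=0$, whence $\theta=\sqrt{q}$ and $\tau=-\sqrt{q}$; choosing $r=2$ in Theorem~\ref{thm.main result} gives $d=rn|\tau|/(\theta-\tau)=q+1=n$. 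Thus any degree-$2$ constituent $\pi$ omitting the trivial representation yields a $(q+1,q+1,2)$-\textsc{eitff} signature matrix $S=\hat\pi(A)$, and matching this against Proposition~\ref{prop.et-taoui} will produce the desired conference matrix of order $q+1=2^k+1$.

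The crux is to show that $S$ has exactly the block shape demanded by Proposition~\ref{prop.et-taoui}. I would analyze $\Gamma$ concretely by indexing each fiber---a punctured line $\{a\,u:a\in\mathbb{F}_q^\times\}$ through the origin of $\mathbb{F}_q^2$---by the scalar $a$. A direct computation with the symplectic form shows that the perfect matching between any two fibers has the form $a\mapsto c\,a^{-1}$ for some $c\in\mathbb{F}_q^\times$, and this map is an involution, so every off-diagonal block $A_{ij}$ is a symmetric permutation matrix. Applying the discrete logarithm to identify $\mathbb{F}_q^\times$ with $\mathbb{Z}_{q-1}$ converts these matchings into the reflections of the standard dihedral action, which confirms $\Gamma\cong D_{q-1}$ (as asserted in Example~\ref{ex.mathon}) and, more importantly, certifies that each $A_{ij}$ is a reflection rather than a rotation.

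Because $q-1=2^k-1$ is odd, the inclusion $\Gamma\hookrightarrow\operatorname{U}(q-1)$ decomposes as the trivial representation together with $(q-2)/2$ inequivalent two-dimensional irreducibles, and I would take $\pi$ to be any one of these, say $\rho_j$. In the standard realization, $\rho_j$ carries every reflection to a matrix $[\begin{smallmatrix}\cos\phi&\sin\phi\\\sin\phi&-\cos\phi\end{smallmatrix}]$, which is precisely the Et-Taoui block associated with the unimodular scalar $C_{ij}=\cos\phi+i\sin\phi$. Reading these scalars off the off-diagonal blocks of $\hat\pi(A)$ therefore defines a matrix $C\in\mathbb{C}^{(q+1)\times(q+1)}$ for which $S=\hat\pi(A)$ is exactly the matrix built from $C$ in Proposition~\ref{prop.et-taoui}. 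Since Theorem~\ref{thm.main result} already guarantees that $S$ is a $(q+1,q+1,2)$-\textsc{eitff} signature matrix, Proposition~\ref{prop.et-taoui} forces $C$ to be a symmetric conference matrix in $\mathbb{C}^{(2^k+1)\times(2^k+1)}$.

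The step I expect to be most delicate is the parity bookkeeping in the second paragraph: one must check that every matching permutation is genuinely an odd element of $D_{q-1}$ and that the chosen $\rho_j$ sends odd elements to symmetric orthogonal matrices. Once this is secured, the symmetry $(S_{ij})^\top=S_{ij}$ required by the Et-Taoui form is automatic---reflections are involutions, so $A_{ji}=A_{ij}^{-1}=A_{ij}$---and the theorem follows by invoking Theorem~\ref{thm.main result} and Proposition~\ref{prop.et-taoui} in turn.
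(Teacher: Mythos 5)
Your proposal is correct and follows essentially the same route as the paper's proof: apply Theorem~\ref{thm.main result} to the Mathon \textsc{drackn} with fibers the punctured lines of $\mathbb{F}_q^2$, observe that the fiber-matchings are reflections in $D_{2(q-1)}$ (the paper writes them as $x\mapsto -x-\gamma_{k\ell}$ after taking discrete logs, matching your $a\mapsto ca^{-1}$), evaluate at a degree-$2$ irreducible to land in the Et-Taoui block form, and invoke Proposition~\ref{prop.et-taoui} in reverse. Your explicit check that $\delta=0$ gives $d=n$ is a detail the paper leaves implicit but which is genuinely needed to apply the forward direction of Proposition~\ref{prop.et-taoui}.
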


Recently, the authors constructed symmetric conference matrices of the same size in~\cite{FickusIJM:21} using a very different approach.
The proof of Theorem~\ref{thm.mathon eitffs} uses the representation theory of the dihedral group.
Let $m$ be an odd positive integer, and define the dihedral group $D_{2m}$ to be the group of bijections $g\colon\mathbb{Z}_m\to\mathbb{Z}_m$ defined by $g(x):= \epsilon x+b$ with $\epsilon=\pm1$ and $b\in\mathbb{Z}_m$.
Consider the action of $D_{2m}$ on $\mathbb{Z}_m$.
Denoting $\omega:=e^{2\pi i/m}$ and the Fourier mode $f_k:=\sum_{j=0}^{m-1} \omega^{-jk}e_j\in\mathbb{C}^m$, then the $m\times m$ matrix representation of the action $D_{2m}\circlearrowleft\mathbb{Z}_m$ can be decomposed as a direct sum of irreducible representations acting on the following mutually orthogonal subspaces: $\operatorname{span}\{f_0\}$ and $\operatorname{span}\{f_k,f_{-k}\}$ for $0<k<m/2$.
The representation is trivial on $\operatorname{span}\{f_0\}$, and the representation on $\operatorname{span}\{f_k,f_{-k}\}$ can be expressed in terms of the orthonormal basis obtained by normalizing the real and imaginary parts of $f_k$:
\[
x+1
\mapsto
\left[\begin{array}{rr}
\cos(\frac{2\pi k}{m})&-\sin(\frac{2\pi k}{m})\\
\sin(\frac{2\pi k}{m})&\cos(\frac{2\pi k}{m})
\end{array}\right],
\qquad
-x
\mapsto
\left[\begin{array}{rr}
1&0\\
0&-1
\end{array}\right].
\]
This gives a real representation of $D_{2m}$.

\begin{proof}[Proof of Theorem~\ref{thm.mathon eitffs}]
To construct the desired symmetric conference matrix, we apply Theorem~\ref{thm.main result} to the $(q+1,q-1,1)$-Mathon \textsc{drackn} to obtain an \textsc{eitff} signature matrix of the form given in Proposition~\ref{prop.et-taoui}.

Let $V$ and $B$ be as in Example~\ref{ex.mathon}.
Partition $V\setminus\{0\}$ into the $q+1$ different $1$-dimensional subspaces of $V$ (less zero), each consisting of $m:=q-1$ points.
These $m$ vertices form a fiber in the Mathon \textsc{drackn}.
Select nonzero representatives $\{u_k\}_{k=1}^{q+1}$ of the $1$-dimensional subspaces of $V$, and let $\alpha$ be a generator of the cyclic group $\mathbb{F}_q^*$.
For each $k,\ell\in[q+1]$ with $k\neq\ell$, select $\gamma_{k\ell}\in\mathbb{Z}_{q-1}$ such that $B(u_k,u_\ell)=\alpha^{\gamma_{k\ell}}$.
We use $(k,i)\in[q+1]\times[q-1]$ to label the vertex $\alpha^{i}u_k\in V\setminus\{0\}$.
Considering $B(\alpha^{i}u_k,\alpha^{j}u_\ell)=\alpha^{i+j}B(u_k,u_\ell)=\alpha^{i+j+\gamma_{k\ell}}$ for $k\neq\ell$, the adjacency matrix $A\in(\mathbb{R}^{(q-1)\times(q-1)})^{(q+1)\times(q+1)}$ with this vertex labeling is given by
\[
(A_{k\ell})_{ij}
:=\left\{\begin{array}{cl}
1&\text{if } k\neq\ell \text{ and } i+j+\gamma_{k\ell}\equiv0\bmod q-1,\\
0&\text{otherwise}.
\end{array}\right.
\]
When $k\neq\ell$, the block $A_{k\ell}$ resides in the image of the $(q-1)$-dimensional permutation representation of the dihedral group $D_{2(q-1)}$.
Explicitly, $A_{k\ell}$ represents the group element $x\mapsto-x-\gamma_{k\ell}$, which is not a pure translation since the coefficient of $x$ is $-1$.
As such, when evaluating $A$ at any irreducible constituent representation of degree~$2$ (which in turn exists since $k>1$), there exists $a\in\mathbb{Z}$ such that the resulting matrix $S\in(\mathbb{R}^{2\times 2})^{(q+1)\times(q+1)}$ has $S_{kk}=0$ and
\[
S_{k\ell}:=\left[\begin{array}{rr}
\cos(\frac{2\pi a\gamma_{k\ell}}{q-1})&\sin(\frac{2\pi a\gamma_{k\ell}}{q-1})\\
\sin(\frac{2\pi a\gamma_{k\ell}}{q-1})&-\cos(\frac{2\pi a\gamma_{k\ell}}{q-1})
\end{array}\right]
\qquad
\text{for }k\neq\ell.
\]
By Theorem~\ref{thm.main result}, $S$ is necessarily an \textsc{eitff} signature matrix.
Defining $C\in\mathbb{C}^{(q+1)\times(q+1)}$ by
\begin{equation}
\label{eq.mathon conference}
C_{k\ell}:=\left\{\begin{array}{cl}
\exp(\frac{2\pi ia\gamma_{k\ell}}{q-1}) &\text{if }k\neq\ell,\\
0&\text{otherwise},
\end{array}\right.
\end{equation}
then $S$ can be obtained from $C$ using Proposition~\ref{prop.et-taoui}, which in turn implies that $C$ is a symmetric conference matrix.
\end{proof}

Note that given the conference matrix defined by \eqref{eq.mathon conference}, one can undo the construction described in the proof of Theorem~\ref{thm.mathon eitffs} to recover the underlying Mathon \textsc{drackn}.
In what follows, we use a similar procedure to create \textsc{drackn}s from complex symmetric conference matrices whose off-diagonal entries are all $p$th roots of unity for a common prime $p$.

\begin{theorem}
\label{thm.conf to drackn}
Suppose there is a symmetric conference matrix in $\mathbb{C}^{n\times n}$ whose off-diagonal entries are all $p$th roots of unity for some common prime $p$.
Then $p$ divides $n-2$, and there exists an $(n,m,c)$-\textsc{drackn} with $m=p$ and $c=(n-2)/p$.
\end{theorem}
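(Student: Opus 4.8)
The plan is to reverse the construction in the proof of Theorem~\ref{thm.mathon eitffs}: rather than passing from a \textsc{drackn} to a conference matrix, I will build an $(n,p,c)$-\textsc{drackn} adjacency matrix directly from $C$ and verify (D1)--(D4). Write $\zeta:=e^{2\pi i/p}$ and, using (C1) and (C3), record the off-diagonal entries as $C_{ij}=\zeta^{s_{ij}}$ with $s_{ij}=s_{ji}\in\mathbb{Z}_p$. For each $i\neq j$, let $A_{ij}\in\{0,1\}^{p\times p}$ be the permutation matrix of the involution $\sigma_{ij}\colon x\mapsto-x-s_{ij}$ on $\mathbb{Z}_p$, i.e., $(A_{ij})_{xy}=1$ exactly when $x+y+s_{ij}\equiv0\pmod p$, and set $A_{ii}=0$. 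Properties (D1)--(D3) are then immediate: (D1) holds by fiat; each $\sigma_{ij}$ is a bijection, giving (D2); and since the defining relation $x+y+s_{ij}\equiv0$ is symmetric in $x,y$ and $s_{ij}=s_{ji}$, each $A_{ij}$ is symmetric with $A_{ij}=A_{ji}$, giving (D3).

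The crux is (D4), which with $m=p$ and $c=(n-2)/p$ has $\delta=n-mc-2=0$, so I must show $A^2=(n-1)I_{pn}+c(J_n-I_n)\otimes J_p$. I will compute $A^2$ blockwise. On the diagonal, $(A^2)_{ii}=\sum_{\ell\neq i}A_{i\ell}A_{\ell i}=\sum_{\ell\neq i}A_{i\ell}(A_{i\ell})^\top=(n-1)I_p$, since each $A_{i\ell}$ is a permutation matrix and $A_{\ell i}=(A_{i\ell})^\top$ by (D3). Off the diagonal, a short computation shows that $A_{i\ell}A_{\ell j}$ is the permutation matrix of the translation $x\mapsto x+t_\ell$ with $t_\ell:=s_{i\ell}-s_{\ell j}$, so the $(x,z)$ entry of $(A^2)_{ij}=\sum_{\ell\neq i,j}A_{i\ell}A_{\ell j}$ counts the number of $\ell\notin\{i,j\}$ with $t_\ell\equiv z-x\pmod p$. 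Thus $(A^2)_{ij}=cJ_p$ --- the content of (D4) off the diagonal --- is \emph{equivalent} to the assertion that the multiset $\{t_\ell:\ell\neq i,j\}$ is equidistributed over $\mathbb{Z}_p$.

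The final step extracts this equidistribution from (C4), and is where the hypotheses on $C$ really bite. Using $s_{j\ell}=s_{\ell j}$, one has $C_{i\ell}\overline{C_{j\ell}}=\zeta^{t_\ell}$, so the vanishing off-diagonal entry $(CC^*)_{ij}=\sum_{\ell\neq i,j}\zeta^{t_\ell}=0$ guaranteed by (C4) becomes $\sum_{t\in\mathbb{Z}_p}N_t\zeta^t=0$, where $N_t:=\#\{\ell\neq i,j:t_\ell\equiv t\}$ are nonnegative integers with $\sum_t N_t=n-2$. Because $p$ is prime, the minimal polynomial of $\zeta$ is the cyclotomic polynomial $1+x+\cdots+x^{p-1}$ of degree $p-1$; hence any integer relation $\sum_{t=0}^{p-1}N_t\zeta^t=0$ forces all $N_t$ equal to a common value $\lambda$. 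Then $p\lambda=n-2$, which simultaneously shows $p\mid n-2$ and $N_t=c=(n-2)/p$ for every $t$, completing (D4) and hence producing the desired \textsc{drackn} adjacency matrix.

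I expect this last number-theoretic step --- together with the bookkeeping that identifies the translation parameter $t_\ell$ governing $A_{i\ell}A_{\ell j}$ with the phase $t_\ell$ governing $C_{i\ell}\overline{C_{j\ell}}$ --- to be the main point of the argument. Everything else is a direct verification, but this coincidence is precisely what lets the single scalar condition (C4) enforce the block identity (D4), and it is also what manufactures the divisibility $p\mid n-2$ rather than assuming it.
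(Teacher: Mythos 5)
Your proposal is correct and follows essentially the same route as the paper: the blocks $A_{ij}$ you define (permutation matrices of the involutions $x\mapsto -x-s_{ij}$) are exactly the paper's $T^{\ell(i,j)}R$ up to a sign convention, and both arguments reduce (D4) off the diagonal to the equidistribution of the exponents of $C_{i\ell}\overline{C_{j\ell}}$ over $\mathbb{Z}_p$, extracted from $(CC^*)_{ij}=0$ via the fact that the only rational relation among the $p$th roots of unity is $\sum_t\zeta^t=0$. The only discrepancy is a harmless sign in your translation parameter $t_\ell$ (the composite $\sigma_{i\ell}\circ\sigma_{\ell j}$ translates by $s_{\ell j}-s_{i\ell}$), which does not affect the equidistribution conclusion.
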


\begin{proof}
For the first claim, we take inspiration from the proof of Theorem~5.1 in~\cite{CoutinhoCSZ:16}.
Let $C\in\mathbb{C}^{n\times n}$ denote the complex symmetric conference matrix.
Whenever $i\neq j$, then (C4) and (C1) together give
\begin{equation}
\label{eq.sum of pth roots of unity}
0
=(CC^*)_{ij}
=\sum_{k=1}^n C_{ik}\overline{C_{jk}}
=\sum_{\substack{k=1\\k\not\in\{i,j\}}}^n C_{ik}\overline{C_{jk}}.
\end{equation}
Let $\omega$ denote a primitive $p$th root of unity, and take $a_\ell$ to be the number of times $\omega^\ell$ appears as a term in the right-hand side of \eqref{eq.sum of pth roots of unity}.
Then $\sum_{\ell=0}^{p-1}a_\ell\omega^\ell=0$, and counting terms gives $\sum_{\ell=0}^{p-1}a_\ell=n-2$.
Since $\{\omega^\ell\}_{\ell=1}^{p-1}$ is linearly independent over $\mathbb{Q}$, the unique rational dependence between $\{\omega^\ell\}_{\ell=0}^{p-1}$ is given by $\sum_{\ell=0}^{p-1}\omega^\ell=0$ (up to scaling).
Thus, $a_\ell=(n-2)/p$ for every $\ell$, which in turn implies that $p$ divides $n-2$.

For the second claim, we take inspiration from Section~5 in~\cite{BlokhuisBE:18} to construct a \textsc{drackn} adjacency matrix.
Define the translation and reversal matrices $T,R\in\mathbb{C}^{p\times p}$ by indexing the rows and columns by $\mathbb{Z}/p\mathbb{Z}$ and putting $Te_i:=e_{i+1}$ and $Re_i:=e_{-i}$.
Next, whenever $i\neq j$, take $\ell(i,j)$ to be the unique member of $\{0,\ldots,p-1\}$ such that $C_{ij}=\omega^{\ell(i,j)}$.
Then we define $A\in(\mathbb{C}^{p\times p})^{n\times n}$ by $A_{ii}:=0$ and $A_{ij}:=T^{\ell(i,j)}R$ whenever $i\neq j$.
Then $A$ satisfies (D1) and (D2) immediately.
For (D3), we take the $\ell(i,j)$th power of $T^{-1}=RTR^{-1}$ and leverage the symmetry of $C$ to obtain
\[
(A_{ij})^\top
=(T^{\ell(i,j)}R)^\top
=R^{-1}T^{-\ell(i,j)}
=T^{\ell(i,j)}R^{-1}
=T^{\ell(j,i)}R
=A_{ji}.
\]
Finally, for (D4), we start by applying (D1)--(D3) to get
\[
(A^2)_{ii}
=\sum_{j=1}^n A_{ij}A_{ji}
=\sum_{\substack{j=1\\j\neq i}}^n A_{ij}(A_{ij})^{-1}
=(n-1)I_p.
\]
For $i\neq j$, we apply (D3) and the definition of $A$ to obtain
\begin{equation}
\label{eq.sum of translation operators}
(A^2)_{ij}
=\sum_{k=1}^n A_{ik}A_{kj}
=\sum_{k=1}^n A_{ik}(A_{jk})^\top
=\sum_{\substack{k=1\\k\not\in\{i,j\}}}^n T^{\ell(i,k)-\ell(j,k)}.
\end{equation}
Considering $C_{ik}\overline{C_{jk}}=\omega^{\ell(i,k)-\ell(j,k)}$, our proof of the first claim establishes that for each $\ell\in\{0,\ldots,p-1\}$, the matrix $T^\ell$ appears as a term of the right-hand side of \eqref{eq.sum of translation operators} exactly $a_\ell=(n-2)/p$ times.
Thus, $(A^2)_{ij}=\frac{n-2}{p}\cdot J_{p}$, and the claim follows.
\end{proof}

\section*{Acknowledgments}

JWI was supported by the Air Force Research Laboratory AFIT Directorate, through the Air Force Office of Scientific Research Summer Faculty Fellowship Program, Contract Numbers FA8750-15-3-6003, FA9550-15-0001 and FA9550-20-F-0005.
JJ was supported by NSF DMS 1830066.
DGM was supported by AFOSR FA9550-18-1-0107 and NSF DMS 1829955.
The views expressed in this article are those of the authors and do not reflect the official policy
or position of the United States Air Force, Department of Defense, or the U.S. Government.

\end{document}